\DeclareRobustCommand{\cyrtext}{%
  \fontencoding{T2A}\selectfont\def\encodingdefault{T2A}}
\DeclareRobustCommand{\textcyr}[1]{\leavevmode{\cyrtext #1}}
  \theoremstyle{plain}
  \newtheorem*{thm*}{\protect\theoremname}
\theoremstyle{plain}
\newtheorem{thm}{\protect\theoremname}
 \theoremstyle{definition}
 \newtheorem*{defn*}{\protect\definitionname}
\gdef\SetFigFontNFSS#1#2#3#4#5{} 
\DeclareMathOperator{\supp}{supp}
\theoremstyle{remark}
\newtheorem*{qst*}{Question}
\newtheorem*{rmrks*}{Remarks}
  \providecommand{\definitionname}{Definition}
  \providecommand{\theoremname}{Theorem}
\providecommand{\theoremname}{Theorem}
\begin{document}

\title{Singular distributions and symmetry of the spectrum}

\author{Gady Kozma and Alexander Olevskii}

\address{GK: Department of Math, The Weizmann Institute of Science, Rehovot
76100, Israel.}

\address{AO: School of Mathematics, Tel Aviv University, Tel Aviv 69978, Israel.}

\email{gady.kozma@weizmann.ac.il, olevskii@post.tau.ac.il}

\maketitle
This is a survey of the ``Fourier symmetry'' of measures and distributions
on the circle in relation with the size of their support.  Mostly
it is based on our paper \cite{KO} and a talk given by the second
author in the 2012 Abel symposium.

\section{Introduction}

Below we denote by $S$ a Schwartz distribution on the circle group
$\mathbb{T}$; by $K_{S}$ its support; and by $\widehat{S}(n)$ its
Fourier transform. It has polynomial growth. $S$ is called a \emph{pseudo-function}
if $\widehat{S}(n)=o(1)$. In this case the Fourier series
\[
\sum_{n=-\infty}^{\infty}\widehat{S}(n)e^{int}
\]
converges to zero pointwisely on $^{c}K$, see \cite{KS94}. 

The purpose of this survey is to highlight the phenomenon that some
fundamental properties of a trigonometric series depend crucially
on the ``balance'' between its analytic and anti-analytic parts
(which correspond to the positive and negative parts of the spectrum).
We start with a classic examples, comparing Menshov's and Privalov's
theorems.
\begin{thm*}
[Menshov, \cite{M16}]There is a (non-trivial) singular, compactly
supported measure $\mu$ on the circle which is a pseudo-function. 
\end{thm*}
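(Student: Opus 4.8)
The plan is to realize $\mu$ as the Cantor--Lebesgue measure of a symmetric perfect set. Fix dissection ratios $\xi_k\in(0,\tfrac12)$ and build the usual nested family: $E_0=[0,1]$, and each interval surviving at step $k-1$ is replaced by its two extreme subintervals of relative length $\xi_k$. Put $E=\bigcap_k E_k$ and let $\mu$ be the uniformly distributed probability measure on $E$; equivalently $\mu$ is the law of $X=\sum_{k\ge1}\varepsilon_k g_k$, with $\varepsilon_k$ independent fair coin flips and gaps $g_k=\xi_1\cdots\xi_{k-1}(1-\xi_k)$. With this description three of the four required properties are essentially automatic: $\mu$ is a probability measure, hence non-trivial; it is supported on the compact set $E\subset[0,1]$; it is continuous because each coordinate is determined by $x$, so $\mu(\{x\})\le\prod_k\tfrac12=0$; and it is singular as soon as $\prod_k 2\xi_k=0$, since then $|E|=\lim_k 2^k\ell_k=0$, where $\ell_k=\xi_1\cdots\xi_k$.

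On the Fourier side the independence gives a clean product,
\[
\widehat{\mu}(n)=\prod_{k\ge1}\tfrac12\bigl(1+e^{2\pi i n g_k}\bigr)=\prod_{k\ge1}e^{\pi i n g_k}\cos(\pi n g_k),\qquad\text{so}\qquad \bigl|\widehat{\mu}(n)\bigr|=\prod_{k\ge1}\bigl|\cos(\pi n g_k)\bigr|.
\]
The whole problem thus reduces to choosing the ratios so that this cosine product tends to $0$ as $|n|\to\infty$; that single estimate \emph{is} the content of the theorem.

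Here lies the main obstacle, and it has two faces. For fixed large $n$ the factors with $k$ large have $n g_k$ tiny and contribute $\approx1$ (this is exactly what makes the product converge), so all the decay must be squeezed out of the finitely many scales with $ng_k\gtrsim1$. For a self-similar choice $\xi_k\equiv\xi$, the gaps $g_k$ are geometric, these arguments form a geometric progression, and if $1/\xi$ is a Pisot number they stay uniformly close to the integers; then $|\cos(\pi n g_k)|\to1$ along $n=$ powers of the base and $\mu$ fails to be a pseudo-function (the middle-thirds set is the standard failure). One is tempted to force decay by piling up \emph{many} comparable gaps at each scale, but doing so only smooths $\mu$ toward absolute continuity — already one gap per dyadic scale reproduces Lebesgue measure. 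So singularity forces the scales to be lacunary, and lacunarity is precisely what breeds the resonances; reconciling these competing demands is the heart of the matter.

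To finish I would break the resonance. The clean option is to keep a constant ratio but take $1/\xi$ to be a \emph{non-Pisot} number: then the progression $\{n g_k\}$ equidistributes modulo $1$ well enough that a positive proportion of the $O(\log|n|)$ active factors are bounded away from $1$, which already yields polynomial decay $|\widehat{\mu}(n)|\lesssim|n|^{-c}$ — this is the number-theoretic route, and the equidistribution estimate is where the work concentrates. The alternative, closer to Menshov's original, is a hands-on variable-ratio construction: process the frequencies in blocks $N_j\le|n|<N_{j+1}$ and, when selecting the next batch of ratios, insert gaps tuned so that every frequency still ``alive'' (one whose partial product has not yet become small) acquires a fresh factor $|\cos(\pi n g_k)|\le 1-c$, while $\prod_k 2\xi_k=0$ is preserved for singularity. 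A diagonal bookkeeping then guarantees that each large $n$ is eventually killed. I expect the genuine crux to be neither step in isolation but the \emph{uniform} control over all large $n$ simultaneously, carried out without inadvertently fattening the measure.
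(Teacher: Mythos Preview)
The paper does not prove Menshov's theorem; it is quoted as classical background with a pointer to Bary \cite[\S XIV.12]{B64}. So there is no ``paper's own proof'' to compare against, and your proposal has to be assessed on its own merits.

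Your plan is sound. The product formula $|\widehat{\mu}(n)|=\prod_k|\cos(\pi n g_k)|$ is correct, and you have isolated the real issue: forcing this product to $0$ uniformly in $n$ while keeping $|E|=0$. Both routes you sketch lead to a proof. Route~1 (constant ratio with $1/\xi$ non-Pisot) is precisely the Salem--Zygmund theorem, which the paper itself invokes in Section~\ref{sec:Arithmetics-of-compacts}; one caveat is that Salem--Zygmund in general gives only $\widehat{\mu}(n)=o(1)$, not the polynomial decay $|n|^{-c}$ you assert --- the equidistribution of $\{n\xi^{-k}\}$ for generic non-Pisot $\xi$ is too weak for a power saving without extra arithmetic input. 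The overclaim is harmless here since Menshov's statement needs only $o(1)$. Route~2 (variable ratios, diagonal killing of frequency blocks) is closer in spirit to Menshov's 1916 argument and to the exposition in Bary; it is more elementary in that it sidesteps Pisot--Vijayaraghavan theory entirely, at the price of a less explicit support.

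The one place where your write-up is genuinely incomplete rather than merely terse is Route~2: ``insert gaps tuned so that every frequency still alive acquires a fresh factor $\le 1-c$'' is the whole construction, not a detail. You would need to show that at each stage the surviving frequencies in $[N_j,N_{j+1})$ can all be hit while the new ratios stay bounded away from $\tfrac12$ (so that $\prod 2\xi_k\to 0$ is preserved). This can be done, but it is the proof.
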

See \cite[\S XIV.12]{B64}. In particular, it means that a non-trivial
trigonometric series 
\begin{equation}
\sum_{n\in Z}c(n)e^{int}\label{eq:1}
\end{equation}
may converge to zero almost everywhere. This disproved a common belief
that the uniqueness results of Riemann and Cantor may be strengthened
to any trigonometrique series converging to zero almost everywhere
(shared by Lebesgue, \cite{L1902}). Contrast Menshov's theorem with
the following result:
\begin{thm*}
[Abel \& Privalov]An ``analytic'' series 
\begin{equation}
\sum_{n\geq0}c(n)e^{int}\label{eq:2}
\end{equation}
can not converge to zero on a set of positive measure, unless it is
trivial. 
\end{thm*}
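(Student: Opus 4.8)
The plan is to pass from the trigonometric series to the holomorphic function it defines inside the disc and then invoke a boundary uniqueness theorem. Set $F(z)=\sum_{n\ge0}c(n)z^{n}$. Convergence of \eqref{eq:2} at even a single point $t_{0}$ forces the general term $c(n)e^{int_{0}}\to0$, hence $|c(n)|\to0$; thus the radius of convergence is at least $1$ and $F$ is holomorphic in the open unit disc $\mathbb{D}$. The goal is to show $F\equiv0$, which is equivalent to $c(n)=0$ for every $n$, i.e. to triviality of \eqref{eq:2}.

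First I would convert the hypothesis of pointwise convergence into a statement about the boundary behaviour of $F$. Fix $t$ in the set $E$ of positive measure on which \eqref{eq:2} converges to $0$, and write $z=e^{it}w$, so that $F(e^{it}w)=\sum_{n\ge0}\bigl(c(n)e^{int}\bigr)w^{n}$. Since $\sum_{n\ge0}c(n)e^{int}=0$ by assumption, Abel's limit theorem in its Stolz-angle form gives $F(e^{it}w)\to0$ as $w\to1$ inside any fixed Stolz angle at $1$. Because multiplication by $e^{it}$ is a rotation, it carries a Stolz angle at $1$ to a Stolz angle at $e^{it}$; hence $F$ has nontangential limit $0$ at $e^{it}$ for every $t\in E$.

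It then remains to conclude that a function holomorphic in $\mathbb{D}$ whose nontangential limit is $0$ on a boundary set of positive measure must vanish identically. This is precisely Privalov's boundary uniqueness theorem, and it is the analytic heart of the matter; its proof rests on boundary-uniqueness properties of the disc (harmonic-measure and conformal arguments) and does not require any growth control on $F$. I would therefore cite it and finish the argument: $F\equiv0$, so all the $c(n)$ vanish.

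I expect the decisive point — and the reason the hypothesis must be genuine convergence rather than mere Abel summability — to be the passage to \emph{nontangential} limits in the middle step. Radial limits alone do not suffice: there exist nonzero holomorphic functions in $\mathbb{D}$ with radial limit $0$ on a set of positive measure, so the radial version of Privalov's theorem is false. Ordinary convergence of the series is exactly what lets Abel's theorem deliver the stronger Stolz (nontangential) convergence, and this is what unlocks Privalov's theorem. The main obstacle is thus not the reduction, which is routine, but Privalov's uniqueness theorem itself, which is the substantive classical input and the natural complex-analytic counterpart to the phenomenon that one-sided (analytic) spectra cannot be ``balanced away.''
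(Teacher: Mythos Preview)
Your argument is correct and is essentially identical to the paper's own: the paper likewise invokes Abel's theorem to pass from convergence of $\sum c(n)e^{int}$ to nontangential convergence of $\sum c(n)z^{n}$ at $e^{it}$, and then applies Privalov's boundary uniqueness theorem. Your additional remarks (on the radius of convergence and on why nontangential rather than radial limits are essential) are fine elaborations of the same route.
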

The theorem of Abel \cite[\S 3.14]{Z68} gives that if $\sum c(n)e^{int}$
converges at some $t$, then the function $\sum c(n)z^{n}$, which
is analytic in the disc $\{|z|<1\}$, converges \emph{non-tangentially}
to the same value at $e^{it}$. The theorem of Privalov \cite[\S D.III]{K80}
claims that an analytic function on the disc which converges non-tangentially
to zero on a set of positive measure is identically zero. Together
these two results give the theorem above. 

We are interested in different aspects of symmetry and non-symmetry
of the Fourier transform of measures and distributions. Our first
example is Frostman's theorem.

\section{Frostman's theorem}

The classic Frostman theorem connects the Hausdorff dimension of a
compact set $K$ to the (symmetric) behaviour of the Fourier coefficients
of measures supported on $K$. Let us state it in the form relevant
to us.
\begin{thm*}
[Frostman]\hfill\label{thm:1}
\begin{enumerate}
\item If a compact $K$ supports a probability measure $\mu$ s.t. 
\begin{equation}
\sum_{n\ne0}\frac{|\widehat{\mu}(n)|^{2}}{|n|^{1-a}}<\infty\label{eq:3}
\end{equation}
then $\dim K\geq a$. 
\item If $\dim K>a$ then $K$ supports a probability measure $\mu$ satisfying
(\ref{eq:3}). 
\end{enumerate}
\end{thm*}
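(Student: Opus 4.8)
The plan is to route both implications through the classical equivalence between the Fourier condition (\ref{eq:3}) and finiteness of the Riesz $a$-energy
\[
I_{a}(\mu)=\iint_{\mathbb{T}\times\mathbb{T}}\frac{d\mu(x)\,d\mu(y)}{|x-y|^{a}},
\]
and then to invoke the standard potential-theoretic characterization of Hausdorff dimension, namely that $\dim K$ is the supremum of those $a\in(0,1)$ for which $K$ carries a probability measure of finite $a$-energy. Throughout I take $0<a<1$, the only interesting range.

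First I would establish the Fourier/energy dictionary. The periodized Riesz kernel $k_{a}(t)=\sum_{n\ne0}|n|^{a-1}e^{int}$ can be written as $k_{a}(t)=c_{a}|t|^{-a}+g(t)$ on $\mathbb{T}$, where $c_{a}>0$ and $g$ is bounded (continuous away from the origin and integrable). Granting this, Parseval applied to the positive-definite kernel $k_{a}$ gives
\[
\sum_{n\ne0}\frac{|\widehat{\mu}(n)|^{2}}{|n|^{1-a}}=\iint k_{a}(x-y)\,d\mu(x)\,d\mu(y)=c_{a}I_{a}(\mu)+\iint g(x-y)\,d\mu(x)\,d\mu(y),
\]
and since the last integral is bounded in absolute value by $\|g\|_{\infty}$, condition (\ref{eq:3}) holds if and only if $I_{a}(\mu)<\infty$. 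Verifying the kernel asymptotics is the one genuinely analytic ingredient: I would compare $\sum_{n\geq1}n^{a-1}\cos(nt)$ with the integral $\int_{1}^{\infty}x^{a-1}\cos(xt)\,dx$ and read off the $|t|^{-a}$ singularity, controlling the difference by summation by parts. This is the step I expect to be the main obstacle, since everything that follows is a transcription of textbook potential theory.

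For part (i), assume (\ref{eq:3}), so by the dictionary $I_{a}(\mu)<\infty$. Then for $\mu$-almost every $x$ the potential $\int|x-y|^{-a}\,d\mu(y)$ is finite, which forces $\mu(B(x,r))=o(r^{a})$ as $r\to0$ for $\mu$-a.e.\ $x$. Restricting $\mu$ to a subset on which this growth bound is uniform and applying the mass distribution principle yields $\mathcal{H}^{a}(K)>0$, hence $\dim K\geq a$.

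For part (ii), given $\dim K>a$, choose $b$ with $a<b<\dim K$, so that $\mathcal{H}^{b}(K)>0$. By Frostman's lemma $K$ carries a probability measure $\mu$ with $\mu(B(x,r))\leq Cr^{b}$ for all $x$ and $r$. Writing the potential as a layer-cake integral and inserting this growth bound with $a<b$ shows that $\int|x-y|^{-a}\,d\mu(y)$ is bounded uniformly in $x$, so $I_{a}(\mu)<\infty$; the dictionary then returns (\ref{eq:3}), completing the proof.
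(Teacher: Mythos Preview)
Your argument is correct and is the standard potential-theoretic route to Frostman's theorem: identify the Fourier sum with the Riesz $a$-energy via the kernel asymptotics $\sum_{n\ne0}|n|^{a-1}e^{int}\sim c_a|t|^{-a}$, then appeal to the mass distribution principle for (i) and to Frostman's lemma for (ii). The kernel step you flag as the main obstacle is indeed the only real analytic work, and your proposed comparison with $\int_1^\infty x^{a-1}\cos(xt)\,dx$ via summation by parts is the usual way to handle it.

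As for comparison with the paper: the paper does \emph{not} prove this theorem. It is stated as classical background, and the proof sketch that follows it in Section~2 is for the new one-sided result (Theorem~1), not for Frostman's theorem itself. The paper even remarks at the end of that sketch that ``this proof is quite different from the original proofs of Frostman and Beurling, which were potential-theoretic in nature'' --- and your proof is exactly that potential-theoretic original. So there is nothing to compare against; your proposal simply supplies the classical argument the paper takes for granted.
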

In fact, the first clause may be strengthened to only require that
$K$ supports a (non-trivial) complex-valued measure, or even a distribution
satisfying (\ref{eq:3}). This follows from the following result
\begin{thm*}
[Beurling]If $K$ supports a distribution $S$ satisfying (\ref{eq:3})
then it also supports a probability measure with this property. 
\end{thm*}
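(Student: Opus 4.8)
The plan is to read condition~(\ref{eq:3}) as an energy/Sobolev statement and reduce the problem to a positivity fact from classical potential theory. Put $s=(1-a)/2\in(0,1/2)$. Up to the harmless $n=0$ term and the equivalence of the weights $|n|^{-(1-a)}$ and $(1+n^2)^{-s}$, the sum in~(\ref{eq:3}) is the squared Sobolev norm $\|S\|_{H^{-s}}^2=\sum_n|\widehat S(n)|^2(1+n^2)^{-s}$; for a positive measure $\mu$ it equals, up to a constant, the Riesz $a$-energy $I_a(\mu)=\iint|x-y|^{-a}\,d\mu(x)\,d\mu(y)$, since the kernel $k_a(x)\sim|x|^{-a}$ has $\widehat{k_a}(n)\asymp|n|^{-(1-a)}$. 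Thus ``$S$ satisfies (\ref{eq:3})'' means exactly $S\in H^{-s}$. Writing $H^s$ for the dual space, define the $s$-capacity
\[
\mathrm{cap}_s(K)=\inf\{\|\phi\|_{H^s}^2:\ \phi\in C^\infty(\mathbb T),\ \phi=1\ \text{on a neighbourhood of}\ K\}.
\]
The strategy is: first show that a single nonzero $S\in H^{-s}$ supported on $K$ already forces $\mathrm{cap}_s(K)>0$; then quote classical potential theory to produce the desired probability measure.

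The key step is the implication $\mathrm{cap}_s(K)=0\Rightarrow S=0$ for every $S\in H^{-s}$ with $\supp S\subseteq K$, which I would prove by Hilbert-space duality. Choose $\phi_j\in C^\infty(\mathbb T)$ with $\phi_j=1$ near $K$ and $\|\phi_j\|_{H^s}\to0$. For an arbitrary test function $\psi\in C^\infty(\mathbb T)$ the difference $(\phi_j-1)\psi$ vanishes on a neighbourhood of $\supp S$, so $\langle S,\psi\rangle=\langle S,\phi_j\psi\rangle$. Using the duality bound $|\langle S,\phi_j\psi\rangle|\le\|S\|_{H^{-s}}\,\|\phi_j\psi\|_{H^s}$ together with the fact that multiplication by the fixed smooth function $\psi$ is bounded on $H^s$ (so $\|\phi_j\psi\|_{H^s}\le C_\psi\|\phi_j\|_{H^s}$), one gets $|\langle S,\psi\rangle|\le C_\psi\|S\|_{H^{-s}}\|\phi_j\|_{H^s}\to0$. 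Hence $\langle S,\psi\rangle=0$ for all $\psi$, i.e.\ $S=0$. Contrapositively, the nonzero $S$ of the hypothesis certifies $\mathrm{cap}_s(K)>0$.

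It remains to convert positive capacity into a positive measure, and here I would invoke the classical theory. For the Riesz kernel $k_a$ with $0<a<1$ the variational $s$-capacity above and the energy capacity $\big(\inf\{I_a(\mu):\mu\ge0,\ \mu(\mathbb T)=1,\ \supp\mu\subseteq K\}\big)^{-1}$ vanish simultaneously; this is standard (Landkof, Adams--Hedberg). Therefore $\mathrm{cap}_s(K)>0$ gives $\inf_\mu I_a(\mu)<\infty$, so $K$ carries a probability measure of finite $a$-energy, concretely the equilibrium measure, which is \emph{positive} by construction. Unwinding the energy back into the form (\ref{eq:3}) finishes the proof.

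The conceptual heart is the second paragraph, which is elementary and is really where Beurling's point lies: one finite-energy distribution already witnesses that $K$ is not capacity-negligible. The genuine obstacle is the last step, namely the equivalence of the function-theoretic and measure-theoretic capacities and, inside it, the positivity of the equilibrium measure, which rests on the maximum principle for Riesz kernels. A self-contained alternative avoiding this citation is to minimise $\|S\|_{H^{-s}}^2$ directly over the affine set $\{\,\supp S\subseteq K,\ \langle S,1\rangle=1\,\}$: the minimiser's Riesz potential $k_a*S$ is constant quasi-everywhere on $K$ (the equilibrium condition, read off from the variational inequality), and the maximum principle then forces the minimiser to be a nonnegative measure. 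This route, however, meets exactly the same potential-theoretic difficulty at the positivity step.
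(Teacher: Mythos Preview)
The paper does not actually prove Beurling's theorem; it merely states it and refers the reader to \cite[Th\'eor\`eme V, \S III]{KS94} and \cite{B49}. So there is no in-paper argument to compare against.

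Your proposal is correct and is in fact the standard potential-theoretic route one finds in those references (and, in modern dress, in Adams--Hedberg): recast condition~(\ref{eq:3}) as membership of $S$ in the negative Sobolev space $H^{-s}$ with $s=(1-a)/2$, equivalently as finite Riesz $a$-energy; use the duality argument of your second paragraph to show that a nonzero $S\in H^{-s}$ supported on $K$ forces $\mathrm{cap}_s(K)>0$; and then invoke the equivalence of the Sobolev (functional) capacity and the energy capacity to obtain the equilibrium probability measure on $K$ with finite $a$-energy. Your self-assessment is accurate: the duality step is soft, and the genuine content sits in the last step, namely the positivity of the equilibrium measure, which ultimately rests on the maximum principle for Riesz potentials. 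That is exactly Beurling's original point, and your write-up is a faithful modern rendering of it.
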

\noindent see \cite[Th\'eor\`eme V, \S III]{KS94} or \cite{B49}.
It is worth contrasting this with the result of Piatetski-Shapiro
\cite{P54} that one may find a compact $K$ which supports a distribution
$S$ with $\widehat{S}(n)\to0$ but does not support a measure $\mu$
with $\widehat{\mu}(n)\to0$. We will return to the theorem of Piatetski-Shapiro
in section \ref{sec:Arithmetics-of-compacts}. 

The first result from \cite{KO} which we wish to state is the one-sided
version of the theorem of Frostman \& Beurling:
\begin{thm}
If $K$ supports a distribution $S$ such that
\[
\sum_{n<0}\frac{|\widehat{S}(n)|^{2}}{|n|^{1-a}}<\infty
\]
then $\dim(K)\geq a$. \end{thm}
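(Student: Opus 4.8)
The plan is to prove this one-sided strengthening of Frostman's theorem by reducing it, via a suitable analytic regularization, to the classical (two-sided) Frostman criterion already stated in the excerpt. The key obstacle is that the hypothesis controls only the negative half of the spectrum; the positive half of $\widehat S$ may grow, so $S$ need not even be a measure, and we cannot apply Beurling's or Frostman's theorem directly. Let me think about how to manufacture the missing symmetry.

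Let me reason through the structure. Write $S = S_- + S_+$ where $S_-$ collects the frequencies $n<0$ (and, say, the $n=0$ term) and $S_+$ the frequencies $n>0$. The summability hypothesis says precisely that $S_-$, viewed through $\sum_{n<0} |\widehat S(n)|^2 |n|^{a-1} < \infty$, already satisfies a Frostman-type energy bound. The anti-analytic part $S_-$ is therefore a genuine distribution (in fact better) with good negative-frequency decay, while $S_+$ is analytic, i.e. supported on nonnegative frequencies. The idea I would pursue is to multiply $S$ by a well-chosen auxiliary analytic object so as to kill or tame $S_+$ without enlarging the support and without destroying the $n<0$ estimate. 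Concretely, I would look for an analytic measure or distribution $h$ (spectrum in $n\ge 0$) that is nonvanishing on a neighborhood of a point of $K$, form the product $hS$ on the circle, and exploit that multiplication by an analytic factor convolves the Fourier side in a way that mixes the positive spectrum of $S_+$ with the positive spectrum of $h$ but leaves the structure of the negative tail controllable.

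First I would set up the reduction carefully: by the strengthened first clause of Frostman together with Beurling's theorem (both quoted above), it suffices to produce on $K$ a nontrivial distribution $T$ satisfying the full two-sided condition $\sum_{n\ne 0} |\widehat T(n)|^2 |n|^{a-1} < \infty$; then $\dim(K)\ge a$ follows immediately. So the entire game is to convert the one-sided estimate on $S$ into a two-sided estimate on some $T$ with $\supp T\subseteq K$. Second, I would exploit the following asymmetry trick: consider the complex conjugate relation $\widehat{\overline S}(n) = \overline{\widehat S(-n)}$. The distribution $\overline S$ has its good spectral decay on the \emph{positive} side, and $\supp \overline S = \supp S \subseteq K$. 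Hence $\overline S$ satisfies $\sum_{n>0} |\widehat{\overline S}(n)|^2|n|^{a-1}<\infty$. Now the natural candidate is a product: form $T = S\cdot \overline S = |S|^2$ in a suitable sense, or more safely a smoothed/localized product, whose Fourier coefficients are a convolution $\widehat S * \widehat{\overline S}$ that I expect to inherit summability on \emph{both} tails because each factor supplies decay on one side.

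The hard part will be making the product $S \cdot \overline S$ meaningful and estimating its convolved Fourier coefficients, since $S$ is only a distribution and products of distributions are generically ill-defined; one must use the one-sided $\ell^2$-weighted control to justify that the relevant convolution converges and that the resulting $T$ is nontrivial and still supported in $K$. I would handle definability by first localizing: multiply $S$ by a smooth bump $\varphi$ supported near a point of $K$, which preserves the weighted-$\ell^2$ bound (smooth multipliers act boundedly on these weighted spaces) and does not enlarge the support, and then analyze $\varphi S \cdot \overline{\varphi S}$. The convolution estimate would proceed by splitting $\widehat T(n) = \sum_k \widehat{\varphi S}(k)\,\overline{\widehat{\varphi S}(k-n)}$ and using Cauchy--Schwarz against the weights $|k|^{a-1}$ on the side where each factor is summable, verifying that $\sum_n |\widehat T(n)|^2 |n|^{a-1}<\infty$. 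The two genuine dangers are (i) that $T$ might vanish identically, which I would rule out by choosing the localization so that $T$ has nonzero total mass or a nonzero pairing against a test function, and (ii) the near-diagonal and $n=0$ contributions in the convolution, which require the weight exponent $a-1\in(-1,0)$ to be handled with care. If the direct product turns out too singular, the fallback is to replace $|S|^2$ by a regularized bilinear form or to run the argument through the Riesz-product / analytic-projection machinery, but I expect the localized self-product to be the cleanest route to the two-sided estimate that Frostman and Beurling then convert into the dimension bound.
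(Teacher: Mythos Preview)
Your self-product idea has a genuine gap: the convolution you want to control is divergent on the very side you need. Write $\widehat T(n)=\sum_k \widehat S(k)\,\widehat{\overline S}(n-k)$ with $\widehat{\overline S}(m)=\overline{\widehat S(-m)}$. The hypothesis controls $\widehat S(k)$ only for $k<0$, hence $\widehat{\overline S}(m)$ only for $m>0$; on the complementary half each sequence is merely of polynomial growth. For \emph{every} $n$ the range $k\ge\max(0,n)$ gives $k\ge 0$ and $n-k\le 0$, so both factors sit in their uncontrolled half-lines simultaneously. This is an infinite tail of ``bad $\times$ bad'' terms for which you have no bound whatsoever; the series need not converge, and no Cauchy--Schwarz splitting can save it because the relevant weighted $\ell^2$ norms on that region are not finite. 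Localizing by a smooth $\varphi$ does not help: $\widehat{\varphi S}$ is obtained from $\widehat S$ by convolution with a rapidly decaying sequence, which preserves but does not improve polynomial growth on the positive side. Equivalently, the wave-front sets of $S$ and $\overline S$ collide head-on along $K$, so $|S|^2$ is exactly the kind of product of distributions that cannot be defined; the one-sided energy bound gives you no microlocal separation. Your fallbacks (``Riesz-product / analytic-projection machinery'') are not specified enough to rescue the scheme, and analytic projection in particular destroys the support condition you need.

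The paper's argument avoids this obstruction by never multiplying $S$ by anything supported on $K$. Instead it first treats the endpoint $a=1$ by complex-analytic means (Phragm\'en--Lindel\"of type results of Dahlberg and Berman for functions of slow growth in the disc: if the anti-analytic part of $S$ is in $L^2$ and $\dim K<1$ then $S=0$). It then reduces general $a$ to $a=1$ by \emph{convolving} $S$ with a Salem measure $\nu$ supported on a set $E$ of Minkowski dimension $<1-a+\epsilon$ with $|\widehat\nu(n)|\le C|n|^{-(1-a)/2}$. Convolution is pointwise multiplication on the Fourier side, so $\sum_{n<0}|\widehat{S*\nu}(n)|^2<\infty$ with no interaction between frequencies; hence $\dim(K+E)\ge 1$ by the endpoint case, and $\dim K\ge 1-\dim_{\mathrm{Mink}}E\ge a-\epsilon$. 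The passage through sumsets and Salem sets is precisely what replaces the unavailable two-sided estimate you were trying to manufacture.
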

\begin{proof}
[Proof sketch, step 1]Examine first the case that $a=1$. Then $S$
is a distribution with anti-analytic part belonging to $L^{2}(\mathbb{T})$.
Based on some Phragm\'en-Lindl\"of type theorems in the disc due
to Dahlberg \cite{D77} and Berman \cite{B92}, one can prove that
if $\dim K<1$ then $S=0$. 

\emph{Step 2}. We now reduce the case of general $a$ to the case
of $a=1$. We use Salem's result which gives a quantitative estimate
of the Menshov theorem above. Namely, given $d$, $0<d<1$, there
is a probability measure $\nu$ supported by a compact set $E$ of
Hausdorff dimension $<d+\epsilon$, and such that $\widehat{\nu}(n)\le C|n|^{-d/2}$
. Notice that, as in the original result of Menshov, not every ``thick''
compact supports such a measure. Some arithmetics of $E$ is involved
(this is in stark contrast to Frostman's theorem where only the dimension
plays a role). Take $d=1-a$ and convolve: $S'=S*\nu$. Then $S'$
has its antianalytic part in $L^{2}$. Thus $\dim(K+E)=1$. The theorem
is prove using the inequality $\dim(S+E)\le\dim S+\dim E$.

A difficulty is that it is not true in general that $\dim(A+B)\le\dim A+\dim B$.
A counterexample may be constructed by making $A$ large in some ``scales''
and small in others, and making $B$ large in the scales where $A$
is small and vice versa. Since the Hausdorff dimension is determined
by the best scales, both $A$ and $B$ would have small Hausdorff
dimension but $A+B$ would be large in all scales, and hence would
have large Hausdorff dimension. It is even possible to achieve $\dim A=\dim B=0$
while $\dim(A+B)=1$ \cite[example 7.8]{F03}. However, if $\dim A$
is understood in the stronger sense of upper Minkowski, or box dimension
(which requires smallness in \emph{all} scales), then the inequality
holds. As all the standard constructions of Salem sets in fact have
the same Hausdorff and Minkowski dimensions, the theorem is finished.
\end{proof}
Let us remark that this proof is quite different from the original
proofs of Frostman and Beurling, which were potential-theoretic in
nature.

\section{``Almost analytic'' singular pseudo-functions.}

There is a delicate difference between symmetric and one-sided situations.
The Frostman-Beurling condition in fact implies that compact $K$
has positive $a$-measure, while the one-sided version does not, at
least for $a=1$. The latter was proved in \cite{KO03}
\begin{thm}
\label{thm:2}There is a distribution $S$ with the properties: 
\begin{enumerate}
\item $\widehat{S}(n)=o(1)$.
\item $m(\supp S)=0$.
\item $\sum_{n<0}|\widehat{S}(n)|^{2}<\infty$.
\end{enumerate}
\end{thm}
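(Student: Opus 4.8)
The plan is to construct $S$ explicitly, building its anti-analytic part to be well-behaved while its analytic part carries the singularity and the support condition. I would seek $S$ of the form $S = \mu + R$, where $\mu$ is a (complex) singular measure supported on a compact set $K$ of Lebesgue measure zero, and $R$ is a correction distribution designed to kill the negative spectrum outside $L^2$. The governing idea is that condition (3) only constrains the negative (anti-analytic) Fourier coefficients, so there is complete freedom on the positive side; we may let the positive coefficients decay merely like $o(1)$, which is exactly what a Menshov-type pseudo-function permits.

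The most natural route uses the Menshov theorem stated above together with a symmetrization trick. Start from a Menshov measure $\mu$: a non-trivial singular measure with $m(\supp\mu)=0$ and $\widehat{\mu}(n)=o(1)$. This already gives properties (1) and (2), but its negative coefficients $\widehat{\mu}(n)$ for $n<0$ need not be square-summable. So the first real step is to \emph{correct the anti-analytic tail}. Split $\mu = \mu_+ + \mu_-$ into its analytic part $\mu_+ = \sum_{n\ge 0}\widehat{\mu}(n)e^{int}$ and anti-analytic part $\mu_-$. The key observation is that I am free to replace $\mu_-$ by \emph{any} distribution whose spectrum lies in $\{n<0\}$ and whose coefficients are $\ell^2$, \emph{provided} the modification preserves the support condition $m(\supp S)=0$. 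A clean way to force $\ell^2$ on the negative side is to start instead from a set $K$ carrying a measure $\mu$ with a quantitative decay rate $\widehat{\mu}(n)=O(|n|^{-\delta})$ for some small $\delta>0$ — precisely the Salem-type measures invoked in Step 2 of the previous proof — since $\sum_{n<0}|n|^{-2\delta}$ converges once $\delta>1/2$. But Salem sets with $\widehat{\mu}(n)=O(|n|^{-\delta})$ and $\delta>1/2$ must have positive dimension, and it is not automatic that $m(\supp\mu)=0$ is compatible with $\ell^2$ decay of all coefficients — indeed Frostman--Beurling shows symmetric $\ell^2$ decay with weight forces positive measure. This tension is exactly why the construction must be \emph{one-sided and not symmetric}.

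Hence the heart of the matter, and the step I expect to be the main obstacle, is decoupling the two sides: arranging fast ($\ell^2$) decay of the \emph{negative} coefficients while the \emph{positive} coefficients are allowed to decay only like $o(1)$, and simultaneously keeping the support of Lebesgue measure zero. I would attack this by a Riesz-product or lacunary-type construction with asymmetric weights, or by a direct interpolation/gluing argument in which one prescribes the negative spectrum to be a fixed $\ell^2$ sequence (say compactly decaying) and then \emph{solves} for an analytic distribution that completes it to something supported on a null set. Concretely, one chooses a thin closed null set $K$, takes a smooth analytic ``multiplier'' or an inner-function-type object associated with $K$, and forms $S$ whose negative part is controlled by hand while the analytic part is obtained as a boundary distribution of a suitable analytic function on the disc. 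The delicate point is that controlling the \emph{support} of the full sum — not just of one Fourier-analytic half — requires that the cancellation between the prescribed anti-analytic piece and the constructed analytic piece actually occurs off a null set, which is where the genuine work lies and why the result was isolated as a separate theorem in \cite{KO03}.

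Finally, once such an $S$ is built, verifying the three listed properties is essentially bookkeeping: (3) holds by the prescribed $\ell^2$ choice of negative coefficients; (1) holds because the positive coefficients are arranged to tend to zero (a pseudo-function condition checked on the analytic half, the negative half being trivially $o(1)$ as an $\ell^2$ tail); and (2) holds by the choice of null carrier set $K$. The substantive content, to be emphasized, is that this is \emph{impossible} in the symmetric case by Frostman--Beurling, so the construction must exploit the freedom of the analytic side in an essential, non-symmetric way.
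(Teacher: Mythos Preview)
Your proposal is not yet a proof: it circles several candidate constructions (Menshov measure plus correction, Salem measure, Riesz products, ``inner-function-type object'') without committing to one or carrying any of them through. You correctly identify the central difficulty --- keeping the support null while the two halves of the spectrum behave asymmetrically --- but you do not resolve it. The Menshov-plus-correction and Salem-measure routes both fail for the reasons you yourself note, and the Riesz-product suggestion is left entirely undeveloped; none of these ideas, as stated, produces an actual $S$.

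The paper's construction (given in the proof sketch of Theorem~\ref{thm:4}, which sharpens the present result, and attributed for this theorem to \cite{KO03}) is precisely the inner-function route you mention only in passing, made concrete. Take a singular probability measure $\mu$ on a null compact $K$, extend it harmonically to the disc, and set $F = e^{\mu + i\widetilde{\mu}}$, so that $1/F$ is inner. The boundary value $f$ is unimodular, hence in $L^2$; the Taylor coefficients of $F$ at $0$ have polynomial growth and define an analytic distribution $G$; and $S := G - f$ is supported on $K$, since off $K$ the radial limits of $F$ coincide with $f$. The anti-analytic coefficients of $S$ are then exactly $-\widehat{f}(n)$ for $n<0$, automatically in $\ell^2$. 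The only remaining issue is $\widehat{G}(n)\to 0$; for Theorem~\ref{thm:2} this is obtained by choosing $K$ sufficiently thick (still of Lebesgue measure zero), while the sharp version in Theorem~\ref{thm:4} requires a random perturbation of $K$. The key point you were missing is that the single analytic object $e^{\mu+i\widetilde{\mu}}$ simultaneously delivers the null support, the $L^2$ anti-analytic part (via unimodularity of the boundary value), and --- with the right $K$ --- the pseudo-function property; there is no separate ``correction'' step to engineer.
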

One can say: a singular pseudo-function can be ``almost'' analytic,
in the sense that its antianalytic part is a usual $L^{2}$ function
(notice that a singular distribution is never analytic, i.e.\ the
antianalytic part cannot be empty). It might be interesting to compare
this with classical Riemannian uniqueness theory, which teaches us
that
\begin{quote}
uniqueness of the decomposition of $f$ in trigonometric series implies
Fourier formulas for coefficients. 
\end{quote}
To make this precise, recall that a set $K$ for which no non-trivial
series may converge to zero outside $K$ is called a set of uniqueness,
or a $\mathcal{U}$-set. With this notation we have
\begin{thm*}
[du Bois-Reymond, Privalov]If a finite function $f\in L^{1}(\mathbb{T})$
has a decomposition in a series (\ref{eq:1}), which converges on
$\mathbb{T}\setminus K$ for some compact $\mathcal{U}$-set $K$
then the series is the Fourier expansion of $f$.
\end{thm*}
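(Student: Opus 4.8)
The plan is to translate the series into its Riemann function, read off Riemann's two classical theorems, and invoke the $\mathcal{U}$-set hypothesis only at the very end, to pass from local to global information. First I would record that a $\mathcal{U}$-set is necessarily of Lebesgue measure zero, so that $\sum_{n}c(n)e^{int}$ converges on a set of full measure; by the Cantor--Lebesgue theorem this forces $c(n)\to0$. Consequently $\sum_{n\ne0}|c(n)|/n^{2}<\infty$, and the Riemann function
\[
F(t)=\frac{c(0)}{2}t^{2}-\sum_{n\ne0}\frac{c(n)}{n^{2}}e^{int}
\]
is continuous. Riemann's theorems then apply: because $c(n)\to0$, $F$ is smooth everywhere, i.e.\ $F(t+h)+F(t-h)-2F(t)=o(h)$ at every $t$; and because the series converges to $f(t)$ at every $t\notin K$, the symmetric second derivative satisfies $D^{2}F(t)=f(t)$ there.

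Next I would reformulate the conclusion. Let $G$ be a genuine second primitive of $f$, namely the continuous, $C^{1}$ function whose formal double integration matches that of the Fourier series of $f$; then $G$ is itself smooth everywhere and $G''=f$ almost everywhere. Setting $\Theta=F-G$, an elementary identification of Fourier coefficients shows that $\sum c(n)e^{int}$ is the Fourier expansion of $f$ if and only if $\Theta$ is an affine function $a+bt$. Thus the problem reduces to proving that the continuous, everywhere-smooth function $\Theta$, whose generalized second derivative vanishes off $K$, is affine.

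The bridge to uniqueness would then run as follows. On each open interval $I$ contiguous to $K$ the series converges to $f$ at \emph{every} point, so $D^{2}F\equiv f$ on $I$ with no exceptional points; by the Schwarz--de la Vall\'ee-Poussin theory of the generalized second derivative this forces $F''=f$ as distributions on $I$, so $\Theta$ is affine on each such $I$. Equivalently, writing $S=\sum c(n)e^{int}$, the distribution $T:=S-f$ satisfies $\widehat{T}(n)=c(n)-\widehat{f}(n)\to0$, hence is a pseudo-function, and the previous step says exactly that $T$ is supported in $K$. By the pseudo-function principle quoted in the introduction, the Fourier series of $T$, namely $\sum\big(c(n)-\widehat{f}(n)\big)e^{int}$, then converges to $0$ at every point of $\mathbb{T}\setminus K$. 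Since $K$ is a set of uniqueness, this series must be trivial, whence $c(n)=\widehat{f}(n)$ for all $n$.

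I expect the genuine obstacle to lie in the middle step: the passage, on each contiguous interval, from the pointwise identity $D^{2}F=f$ to the distributional identity $F''=f$. This is precisely the delicate real-variable theory of functions that are smooth and possess an everywhere-defined finite symmetric second derivative, and the subtlety is to reconcile the everywhere pointwise value of $D^{2}F$ with the value $f$ of $G''$, which a priori is controlled only almost everywhere. Once this local statement is secured, the sole function of the $\mathcal{U}$-set hypothesis is to upgrade the interval-by-interval vanishing into the global vanishing of all coefficients; notably, no arithmetic or metric feature of $K$ beyond uniqueness enters.
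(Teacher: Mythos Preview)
The paper does not prove this theorem; it only states it and refers the reader to Zygmund~[theorem~IX.6.19], Privalov's 1923 note, and Bary's treatise. Your outline is exactly the classical Riemann-function argument found in those sources: Cantor--Lebesgue gives $c(n)\to0$; Riemann's two theorems control the second integral $F$; the du~Bois-Reymond/de~la~Vall\'ee-Poussin lemma upgrades $D^{2}F=f$ to $F''=f$ distributionally on each contiguous interval; and the $\mathcal{U}$-set hypothesis then kills the residual pseudo-function $S-f$. You have also correctly located the one genuinely delicate point (the pointwise-to-distributional passage for the second symmetric derivative when $f$ is merely $L^{1}$). There is nothing further in the paper to compare against.
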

See \cite[theorem IX.6.19]{Z68} or \cite{P23}, and \cite[Chap.\ I, \S 72 and Chap.\ XIV, \S 4]{B64}
for the history of the theorem, and a version for $K$ countable but
not necessarily compact. 

In contrast, take $S$ from theorem \ref{thm:2}. Then 

\[
\sum\widehat{S}(n)e^{int}=0\qquad\forall t\in T\setminus K.
\]
Consider the ``anti-analytic'' part $f:=\sum_{n<0}\widehat{S}(n)e^{int}$.
Then $f$ is an $L^{2}$ function on $\mathbb{T}$, smooth on $T\setminus K$.
It admits an ``analytic decomposition'': 

\[
f(t)=\sum_{n\ge0}c(n)e^{int}\qquad c(n):=-\widehat{S}(n)
\]
which converges pointwisely there. Such a representation is \emph{unique},
however it is \emph{not} the Fourier one.

\section{Critical size of the support of ``almost analytic'' distributions}

One can say a bit more about the support of ``one-sided Frostamn's
distributions''. Let $h(t):=tlog1/t$ , and $\Lambda_{h}$ the corresponding
Hausdorff measure. 
\begin{thm}
\label{thm:3}If $S$ is a (non-trivial) distribution such that the
anti-analytic part belongs to $L^{2}(\mathbb{T})$, then $\Lambda_{h}(K)>0$. 
\end{thm}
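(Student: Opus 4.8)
The plan is to argue by contradiction: assume $S\neq0$ while $\Lambda_{h}(K)=0$, and derive $S=0$. Write $S=F+g$, where $g:=\sum_{n<0}\widehat{S}(n)e^{int}$ is the anti-analytic part, which by hypothesis lies in $L^{2}(\mathbb{T})$, and $F:=\sum_{n\ge0}\widehat{S}(n)e^{int}$ is the analytic part, realized as the boundary distribution of $\Phi(z):=\sum_{n\ge0}\widehat{S}(n)z^{n}$, analytic in $\{|z|<1\}$. Since $\widehat{S}$ grows polynomially, $\Phi$ obeys a crude global bound $|\Phi(z)|\le C(1-|z|)^{-N}$. The key soft input is that $S$ vanishes on $\mathbb{T}\setminus K$, so the boundary values of $\Phi$ there coincide with the smooth function $-g$; thus $\Phi$ develops singularities only as $z$ approaches $K$. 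I would first upgrade the global bound to a distance bound $\log^{+}|\Phi(z)|\le C_{1}N\log\frac{1}{\dist(z,K)}$. This is exactly the Phragm\'en--Lindel\"of type statement furnished by the theorems of Dahlberg \cite{D77} and Berman \cite{B92}: a subharmonic function of controlled growth with good ($L^{2}$, here smooth) radial behaviour off $K$ cannot blow up faster than a power of the distance to $K$.

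The heart of the argument, where the gauge $h(t)=t\log(1/t)$ appears, is the following. Fix $\epsilon>0$ and, using $\Lambda_{h}(K)=0$, cover $K$ by arcs $\{I_{j}\}$ with $\sum_{j}h(|I_{j}|)<\epsilon$. For each radius $r<1$ I would split the circle into the union of collars lying within distance $\asymp|I_{j}|$ of some $I_{j}$, and its complement. On the complement $\Phi$ is comparable to $-g$, so its contribution to $\int_{0}^{2\pi}\log^{+}|\Phi(re^{i\theta})|\,d\theta$ is at most $\int_{\mathbb{T}}\log^{+}|g|\,dm\le\|g\|_{L^{1}}$, a constant independent of $r$. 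On the collar around a single $I_{j}$ the distance bound gives $\log^{+}|\Phi|\lesssim\log\frac{1}{|I_{j}|}$ over an angular window of length $\asymp|I_{j}|$, hence a contribution $\lesssim|I_{j}|\log\frac{1}{|I_{j}|}=h(|I_{j}|)$. Summing over $j$ yields $\int_{0}^{2\pi}\log^{+}|\Phi(re^{i\theta})|\,d\theta\le\|g\|_{L^{1}}+C_{2}N\epsilon$, uniformly in $r$, so that $\Phi$ lies in the Nevanlinna class.

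Once $\Phi$ is of Nevanlinna class it admits nontangential boundary values almost everywhere; since $\Lambda_{h}(K)=0$ forces $m(K)=0$, these values equal $-g$ at a.e.\ boundary point. The resulting identity $F=-g$ compares two functions with disjoint spectra, $F$ supported on frequencies $n\ge0$ and $-g$ on $n<0$, so matching Fourier coefficients forces $\widehat{S}(n)=0$ for every $n$, i.e.\ $S=0$. This contradicts the non-triviality of $S$ and proves $\Lambda_{h}(K)>0$.

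I expect the main obstacle to be the distance-growth estimate together with the multi-scale bookkeeping in the second paragraph. Because a Hausdorff cover uses arcs of many different sizes, one cannot work at the single scale $\delta=1-r$: the bound $|I_{j}|\log\frac{1}{|I_{j}|}$, rather than the weaker $|I_{j}|\log\frac{1}{\delta}$, must be extracted by measuring the growth of $\Phi$ at the scale of each individual arc. Making the Phragm\'en--Lindel\"of input of Dahlberg and Berman quantitative enough to reproduce precisely the gauge $h$, uniformly across scales, is the delicate point; the remaining steps are soft Hardy-space theory.
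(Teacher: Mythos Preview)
The survey does not actually supply a proof of Theorem~\ref{thm:3}; it merely states the result and refers to \cite{KO} for the argument, while the only hint at method is the proof sketch of Theorem~1 (Step~1, case $a=1$), which invokes the Phragm\'en--Lindel\"of type results of Dahlberg \cite{D77} and Berman \cite{B92}. Your proposal follows precisely this route --- split $S$ into its analytic and anti-analytic parts, control the analytic extension $\Phi$ via a Phragm\'en--Lindel\"of bound, force $\Phi$ into the Nevanlinna class, and finish with boundary-value uniqueness --- so at the level of strategy you are in full agreement with what the paper indicates.

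Your identification of where the gauge $h(t)=t\log(1/t)$ enters, through the sum $\sum_j|I_j|\log\frac{1}{|I_j|}$ over a Hausdorff cover, is the correct heuristic and is exactly what makes the result sharper than the dimension statement in Theorem~1. However, the single-radius splitting you describe in the second paragraph does not work as literally written: at a fixed radius $r$ the distance bound only gives $\log^{+}|\Phi|\lesssim\log\frac{1}{1-r}$ over the collar of $I_j$ (since $\dist(re^{i\theta},K)$ can be as small as $1-r$), which yields $|I_j|\log\frac{1}{1-r}$ rather than $h(|I_j|)$. You recognise this in your final paragraph, but it should be stressed that the multi-scale step is not merely ``delicate bookkeeping'' --- it is the substance of the argument. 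One has to exploit subharmonicity of $\log|\Phi|$ to transfer information from the natural radius $1-|I_j|$ of each arc to a common radius, typically via a harmonic majorant built from the harmonic measures of the covering arcs, weighted by $\log\frac{1}{|I_j|}$; this is where the Dahlberg--Berman machinery must be invoked in a form quantitative enough to produce the gauge $h$, and it is not a consequence of the pointwise distance bound alone. With that caveat, your outline is sound and matches the paper's indicated approach.
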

The result is perfectly sharp. 
\begin{thm}
\label{thm:4}There exist a (non-trivial) pseudo-function $S$ such
that 

\[
\sum_{n<0}|\widehat{S}(n)|^{2}<\infty\qquad\Lambda_{h}(K)<\infty.
\]
\end{thm}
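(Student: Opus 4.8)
The plan is to produce $S$ as a weak-$*$ limit of a multiscale (generalized Riesz-product) construction carried out on a Cantor set whose dissection ratios are tuned precisely to the gauge $h(t)=t\log 1/t$. Since Theorem~\ref{thm:2} already furnishes a distribution with all three spectral features (pseudofunction, null support, anti-analytic part in $L^{2}$), the genuinely new content here is the quantitative bound $\Lambda_{h}(K)<\infty$ on the support. Accordingly I would not start from scratch but revisit the construction behind Theorem~\ref{thm:2} and sharpen its scale parameters.

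First the set. I build $K$ as a generalized Cantor set whose $n$-th stage consists of $M_{n}$ intervals of common length $\ell_{n}$, and write $L_{n}:=M_{n}\ell_{n}$ for the total length surviving at stage $n$. Covering $K$ by the stage-$n$ intervals gives $\Lambda_{h}(K)\le\limsup_{n}M_{n}h(\ell_{n})=\limsup_{n}L_{n}\log(1/\ell_{n})$, so I impose $L_{n}\log(1/\ell_{n})=O(1)$, i.e. $L_{n}\asymp 1/\log(1/\ell_{n})$. This forces $L_{n}\to 0$, hence $m(K)=0$ as it must be by Theorem~\ref{thm:3}, while $M_{n}\ell_{n}^{\,s}=L_{n}\ell_{n}^{\,s-1}\to\infty$ for every $s<1$, so $\dim K=1$; thus $K$ sits exactly at the critical gauge, with $\Lambda_{h}(K)<\infty$ (and $\Lambda_{h}(K)>0$ automatic a posteriori from Theorem~\ref{thm:3}).

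Next the distribution. I would take $S=\lim S_{n}$ weakly, where each $S_{n}$ is a smooth probability density on the stage-$n$ set assembled multiplicatively as an asymmetric Riesz-type product $\prod_{k\le n}g_{k}$ over a dissociated lacunary sequence $\lambda_{k+1}>2(\lambda_{1}+\dots+\lambda_{k})$ with amplitudes $a_{k}\to 0$. Choosing each factor $g_{k}$ with its spectral weight concentrated on the positive axis (the analytic building block $1+a_{k}e^{i\lambda_{k}t}$ being the model) pushes the bulk of $\widehat{S}$ into the analytic half-line; the anti-analytic part, which cannot vanish since a singular distribution is never analytic, is injected only at a controlled level so that $\sum_{n<0}|\widehat{S}(n)|^{2}<\infty$. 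Amplitude decay together with dissociation then yields $\widehat{S}(n)=o(1)$, so the limit is a pseudofunction, and the multiplicative Cantor structure keeps $\supp S\subseteq K$.

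The main obstacle is the simultaneity of clauses (ii) and (iii): thinning the support to the critical gauge demands strong oscillation, which tends to fatten the anti-analytic tail, precisely what (iii) forbids. I would break this tension by randomizing the phases, replacing $a_{k}e^{i\lambda_{k}t}$ by $a_{k}e^{i(\lambda_{k}t+\theta_{k})}$ with independent uniform $\theta_{k}$, so that the first moment $\mathbb{E}\sum_{n<0}|\widehat{S}(n)|^{2}=\sum_{n<0}\mathbb{E}\,|\widehat{S}(n)|^{2}$ decouples into a sum that lacunarity makes summable; a Borel--Cantelli selection then fixes a single realization meeting all three spectral requirements while leaving the purely geometric support estimate untouched. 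The remaining delicate point, handled in the Menshov--Salem tradition, is to show the weak-$*$ limit exists and is nontrivial rather than the zero distribution; this I would secure by normalizing $\widehat{S_{n}}(0)=1$ throughout the construction and verifying tightness of the $S_{n}$.
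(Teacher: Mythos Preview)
Your proposal has a genuine structural gap: the Riesz-product mechanism cannot deliver a thin support. A product $\prod_{k}(1+a_{k}e^{i\lambda_{k}t})$ with $|a_{k}|<1$ has nowhere-vanishing partial products, and even the classical singular Riesz products (with $|a_{k}|=1$) are supported on all of $\mathbb{T}$. You write that each $S_{n}$ is simultaneously ``a smooth probability density on the stage-$n$ set'' and ``an asymmetric Riesz-type product $\prod_{k\le n}g_{k}$'': these two descriptions are incompatible. To force $\supp S_{n}$ into the stage-$n$ Cantor approximant, the factor $g_{k}$ must vanish on the newly removed gaps, hence be essentially a real bump and Fourier-symmetric --- which is precisely what destroys the ``analytic building block $1+a_{k}e^{i\lambda_{k}t}$'' idea. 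Randomizing the phases $\theta_{k}$ does nothing to rescue this, since a phase rotation leaves $|1+a_{k}e^{i(\lambda_{k}t+\theta_{k})}|$ bounded away from zero. In short, you have not produced any mechanism that simultaneously concentrates the spectrum on $\mathbb{Z}^{+}$ and concentrates the support on a null set; these two requirements pull in opposite directions, and your construction only addresses the first.

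The paper's construction resolves this tension by a device absent from your proposal. One takes the natural probability measure $\mu$ on a Cantor set $K$ with $\Lambda_{h}(K)<\infty$, forms its analytic completion $\mu+i\widetilde{\mu}$ in the disc, and sets $F=e^{\mu+i\widetilde{\mu}}$, so that $1/F$ is a singular inner function. The boundary value $f$ of $F$ is unimodular, hence in $L^{2}$, while the Taylor coefficients of $F$ have polynomial growth and define an analytic distribution $G$; then $S:=G-f$ is automatically supported on $K$ and has anti-analytic part (namely that of $-f$) in $L^{2}$. No randomness is needed for the $L^{2}$ bound on the negative spectrum --- it comes for free from $|f|=1$. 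Randomization enters for a different purpose: one lets the Cantor intervals ``swim'' around their canonical positions in order to force $\widehat{G}(n)\to 0$, i.e.\ to obtain the pseudo-function property. Here, incidentally, your second-moment heuristic would also fail: since $G\notin L^{2}$ one necessarily has $\sum_{n}\mathbb{E}\,|\widehat{G}(n)|^{2}=\infty$ under any reasonable model, and the argument must be run with fourth moments, establishing $\sum_{n}\mathbb{E}\,|\widehat{G}(n)|^{4}<\infty$.
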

\begin{proof}
[Proof sketch]Let $K$ be a Cantor set on $\mathbb{T}$ of the exact
size, and let $\mu$ be the natural probability measure on $K$. Denote
by $\mu$ also the harmonic extension of $\mu$ into the disc, and
let $\widetilde{\mu}$ be the conjugate harmonic function. Set 
\[
F(z):=e^{\mu+i\widetilde{\mu}}.
\]
In other words, $1/F=e^{-\mu-i\widetilde{\mu}}$ is an inner function
(with the Blaschke term equal to $1$). On the one hand, since $\mu$
is singular, $F$ is unbounded in the disk. On the other hand, its
boundary value $f$ is an $L^{2}$ function --- in fact the boundary
values are exactly $e^{i\widetilde{\mu}}$ so are unimodular. The
constrast between these two properties is what we will use.

First, one can prove that Taylor coefficients $c(n)$ of $F$ at 0
are of polynomial growth. So they correspond to an ``analytic distribution''
on $\mathbb{T}$: 
\[
G:=\sum_{n\geq0}c(n)e^{int}
\]
On the other hand, the boundary value $f$ is an $L^{2}$ function
\[
f=\sum_{n=-\infty}^{\infty}\widehat{f}(n)e^{int}.
\]
Consider the distribution $S$ which is the difference $G-f$. It
is supported by $K$. So it is left to get that $S$ is a ``pseudo-function''. 

How can one get that $\widehat{G}(n)\to0$? One approach is to take
the compact $K$ very very large, and we took this path on our very
first construction of an almost analytic singular pseudo-function
\cite{KO03}. But applying this with a compact $K$ for which $\Lambda_{h}(K)<\infty$
only gives that $\widehat{G}(n)$ converge to $0$ \emph{in average}. 

We solve this problem by letting the components of the Cantor set
$K$ on each step of its construction to ``swim'' a bit around their
canonical positions, randomly. We perform this process heirarchically,
moving each half of the set independently, then moving each quarter
independently with respect to its position in the middle of its half,
and so on. In other words, the position of an interval of order $n$
is the sum of $n$ independent terms, one from each of its ancestors
in the hierarchy defining the Cantor set. 

It is well-known that a random perturbation ``smoothes'' the spectrum.
For example, most of the constructions of Salem sets already mentioned
are random, utilizing this effect, as once the spectrum has been smoothed
the decay of the Fourier coefficients is the best one has given its
$l^{2}$ behaviour, which is known from Frostman's theorem.

The proof has a number of features not shared by previous random constructions,
though. Had we been interested in the Fourier coefficients of $\mu$
itself, then we could have used the locality of the perturbation structure
to write $\widehat{\mu}(n)$ as a sum of \emph{independent} terms
and apply, say, Bernstein's inequality. But we are interested in $e^{\mu+i\widetilde{\mu}}$
and $\widetilde{\mu}$, the complex conjugate of $\mu$ does not have
a local structure. Moving a small piece of $K$ even a little will
change $\widetilde{\mu}$ throughout. 

Another difficulty is that, no matter what probabilistic model one
were to take, it is always true that 
\[
\sum\mathbb{E}|\widehat{G}(n)|^{2}=\infty.
\]
This is because, has this sum been finite we would have that $G$
is in $l^{2}$ almost surely, which is impossible. Hence we are forced
to work with $4^{\textrm{th}}$ moments. Thus we show that $\sum\mathbb{E}|\widehat{G}(n)|^{4}<\infty$,
and conclude that $\widehat{G}(n)\to0$, almost surely, proving the
theorem. The interested reader can see the details of the calculation
of $\mathbb{E}|\widehat{G}(n)|^{4}$ in \cite{KO}, or in \cite{KO06}
where a similar random construction was used.
\end{proof}
Notice that theorems \ref{thm:3} and \ref{thm:4} give a sharp estimate
for the size of the exceptional set of a non-classic analytic expansion. 

A couple of words about the smoothness. It was proved in \cite{KO06}
that the non-analytic half of $S$ in Theorem \ref{thm:2} can be
infinitely smooth, that is $\widehat{S}(-n)=o(1/n^{k})$ for every
$k$. In fact, we have a precise description of the exact maximal
smoothness achievable, or in other words, a quasi-analyticity-like
result. Surprisingly, while in classical quasi-analyticity results
the ``critical smoothness'' is approximately $\widehat{S}(n)\asymp e^{-n/\log n}$,
i.e.\ quite close to analytical smoothness, in our case the maximal
smoothness allowable is approximately $\widehat{S}(-n)\asymp e^{-\log n\log\log n}$
i.e.\ only slightly above $C^{\infty}$ smoothness.

It seems that one has to pay for smoothness by the size of support,
which probably must increase. 

\begin{qst*}How does the ``critical size'' of the support of $K$
depend on the order of smootness?\end{qst*}

\section{Non-symmetry for measures}

Theorem \ref{thm:4} reveals a \emph{non-symmetry} phenomenon for
\emph{distributions}. A singular complex measure can not be ``almost
analytic'' according to the theorem of the brothers Riesz. Measures
in general have more symmetry. Example (Rajchman \cite{R29} or \cite[\S 1.4]{KL87}):
If the Fourier coefficients of a complex measure on the circle are
$o(1)$ at positive infinity then the same is true at negative infinity.
Another result of this sort (Hru\v s\v cev and Peller \cite{HP86}, Koosis and Pichorides, see \cite{K83}):
\begin{thm*}
If $\sum_{n>0}|\widehat{\mu}(n)|^{2}/|n|<\infty$, then the same is
true for sum over $n<0$.
\end{thm*}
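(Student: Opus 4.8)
The plan is to recast both the hypothesis and the conclusion as the finiteness of a Dirichlet integral and then to show that the two resulting integrals differ by a quantity controlled by the total mass $\|\mu\|:=|\mu|(\mathbb{T})$. Introduce the two one-sided logarithmic potentials on the unit disc $\mathbb{D}$,
\[
g_{\pm}(z)=\sum_{n\ge1}\frac{\widehat{\mu}(\pm n)}{n}\,z^{n}=-\int_{\mathbb{T}}\log\bigl(1-z\zeta^{\mp1}\bigr)\,d\mu(\zeta),
\]
both analytic in $\mathbb{D}$. Since $\|g\|_{\mathcal{D}}^{2}=\sum_{n\ge1}n|c_{n}|^{2}$ for $g=\sum c_{n}z^{n}$ in the Dirichlet space $\mathcal{D}$, the choice $c_{n}=\widehat{\mu}(\pm n)/n$ gives $\|g_{\pm}\|_{\mathcal{D}}^{2}=\sum_{n\ge1}|\widehat{\mu}(\pm n)|^{2}/n$. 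Thus the hypothesis is exactly $g_{+}\in\mathcal{D}$ and the conclusion is exactly $g_{-}\in\mathcal{D}$, and it suffices to bound $\|g_{-}\|_{\mathcal{D}}^{2}$ by $\|g_{+}\|_{\mathcal{D}}^{2}$ up to a finite error.

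Next I would express these norms as double integrals over $\mathbb{T}^{2}$. Writing $\widehat{\mu}(n)=\int\bar\zeta^{\,n}\,d\mu(\zeta)$ and expanding $|\widehat{\mu}(\pm n)|^2$, one finds
\[
\|g_{+}\|_{\mathcal{D}}^{2}=\sum_{n\ge1}\frac1n\iint_{\mathbb{T}^{2}}(\eta\bar\zeta)^{n}\,d\mu(\zeta)\,d\bar{\mu}(\eta)=-\iint_{\mathbb{T}^{2}}\log(1-\eta\bar\zeta)\,d\mu(\zeta)\,d\bar{\mu}(\eta),
\]
and likewise $\|g_{-}\|_{\mathcal{D}}^{2}=-\iint\log(1-\bar\eta\zeta)\,d\mu\,d\bar{\mu}$. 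The two kernels have identical modulus, $\log|1-\eta\bar\zeta|=\log|1-\bar\eta\zeta|$, so their difference is governed entirely by the argument:
\[
\|g_{+}\|_{\mathcal{D}}^{2}-\|g_{-}\|_{\mathcal{D}}^{2}=-2i\iint_{\mathbb{T}^{2}}\arg(1-\eta\bar\zeta)\,d\mu(\zeta)\,d\bar{\mu}(\eta).
\]
The decisive observation is that this kernel is \emph{bounded}: writing $\eta\bar\zeta=e^{i\phi}$ one has $\arg(1-e^{i\phi})=\tfrac12(\phi-\pi)$ for $\phi\in(0,2\pi)$, so $|\arg(1-\eta\bar\zeta)|\le\pi/2$. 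Hence the difference is at most $\pi\,\|\mu\|^{2}$ in modulus, and $g_{+}\in\mathcal{D}$ forces $g_{-}\in\mathcal{D}$.

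The one genuine subtlety — and precisely the place where the hypothesis that $\mu$ is a measure is indispensable — is the passage from the series to the log-kernel, since $\sum_{n\ge1}(\eta\bar\zeta)^{n}/n$ converges only conditionally on $\mathbb{T}$, and its real part, the logarithmic energy with kernel $-\log|1-\eta\bar\zeta|$, may well be $+\infty$. I would circumvent this by performing the entire computation at radius $r<1$, replacing $g_\pm$ by $g_\pm(rz)$ and the kernel by $\log(1-r^{2}\eta\bar\zeta)$, where every series converges absolutely and $1-r^{2}\eta\bar\zeta$ stays in the right half-plane; the bounded-argument estimate then gives $\bigl|\,\|g_{+}(r\,\cdot)\|_{\mathcal{D}}^{2}-\|g_{-}(r\,\cdot)\|_{\mathcal{D}}^{2}\,\bigr|\le\pi\|\mu\|^{2}$ \emph{uniformly} in $r$, and letting $r\uparrow1$ yields the claim by monotone convergence. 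It is exactly the finiteness of $\|\mu\|$ that bounds the double integral of the bounded kernel; for a general distribution this mass is infinite and the symmetry genuinely fails, in harmony with Theorem \ref{thm:4}. I therefore expect the bulk of the work to be this convergence bookkeeping, the structural heart of the matter being the single observation that the analytic and anti-analytic logarithmic energies differ only through the bounded kernel $\arg(1-\eta\bar\zeta)$.
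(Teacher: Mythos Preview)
The paper does not actually prove this theorem; it merely quotes it as a known result of Hru\v s\v cev--Peller and Koosis--Pichorides, with references \cite{HP86} and \cite{K83}, and moves on. There is therefore no ``paper's own proof'' to compare against.

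Your argument is correct and self-contained. The identification of $\sum_{n\ge1}|\widehat{\mu}(\pm n)|^{2}/n$ with the Dirichlet norm of the logarithmic potential $g_{\pm}$ is right, the double-integral representation at radius $r<1$ is legitimately obtained by Fubini and absolute convergence, and the key identity
\[
\|g_{+}(r\,\cdot)\|_{\mathcal{D}}^{2}-\|g_{-}(r\,\cdot)\|_{\mathcal{D}}^{2}=-2i\iint_{\mathbb{T}^{2}}\arg(1-r^{2}\eta\bar\zeta)\,d\mu(\zeta)\,d\bar\mu(\eta)
\]
with the bounded kernel $|\arg(1-r^{2}\eta\bar\zeta)|<\pi/2$ is exactly the point. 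Monotone convergence as $r\uparrow1$ is the correct way to close the argument. Your remark that the finiteness of $\|\mu\|$ is precisely what is needed, and that this is why the symmetry fails for distributions, is also well taken.

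As for the original sources: the Hru\v s\v cev--Peller route goes through the theory of Hankel operators and Schatten classes, which is considerably heavier machinery than what you use. The Koosis--Pichorides argument, judging from its title (``\ldots finite Dirichlet integral\ldots''), is closer in spirit to yours. Your proof has the virtue of being completely elementary and of making transparent exactly where the measure hypothesis enters---namely, in bounding the double integral of a bounded kernel against $|\mu|\times|\mu|$.
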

However, certain non-symmetry may happen even for singular measures. 
\begin{thm}
Given $d>0$, $p>2/d$ there is a measure $\mu$ supported on a compact
$K$ such that
\begin{enumerate}
\item $\dim K=d$; 
\item $\widehat{\mu}\in l_{p}(\mathbb{Z}^{-})$ but not in $l^{p}(\mathbb{Z}^{+})$
\end{enumerate}
\end{thm}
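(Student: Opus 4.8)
The plan is to realize $\mu$ through the inner-function mechanism that already drives the proof of Theorem \ref{thm:4}, but tuned on two points: the resulting object must be a genuine (and, since $d<1$, automatically singular) measure rather than a distribution, and the two halves of the spectrum must straddle the $\ell^p$ threshold. First I would fix a hierarchically randomized Cantor set $K\subset\mathbb{T}$ of Hausdorff dimension exactly $d$ --- the ``swimming components'' construction used for Theorem \ref{thm:4} --- carrying its natural probability measure $\nu$, and pass to the analytic function $F=e^{u+i\widetilde{u}}$ on the disc, where $u$ is the Poisson extension of $\nu$ and $\widetilde{u}$ its conjugate, so that $1/F$ is a singular inner function with Blaschke term $1$. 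The boundary values $f$ are unimodular, hence $f\in L^{2}(\mathbb{T})$ and $\widehat{f}\in\ell^{2}$; since $d\le 1$ forces $p>2/d\ge 2$, the negative half of this spectrum already satisfies $\widehat{f}|_{\mathbb{Z}^{-}}\in\ell^{2}\subseteq\ell^{p}$. This is the source of clause (ii) on the negative side, and it requires no fine decay estimate --- only that the boundary function be square-integrable.

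The content therefore lies entirely on the positive side, carried by the analytic Taylor coefficients $c(n)$ of $F$, which I would assemble into the measure so that $\widehat{\mu}(n)$ agrees with $c(n)$ for $n\ge 0$ and with $-\widehat{f}(n)$ for $n<0$. Two things must be arranged at once. The coefficients $c(n)$ must be \emph{bounded} and tend to $0$ (so that $\mu$ is a measure and not merely the pseudo-function of Theorem \ref{thm:4}); and they must decay no faster than $|n|^{-1/p}$, so that $\sum_{n\ge 0}|c(n)|^{p}=\infty$. The role of the hypothesis $p>2/d$ is exactly to leave room for this: $p>2/d$ is equivalent to $1/p<d/2$, so the target positive-side rate $|n|^{-1/p}$ is strictly \emph{slower} than the Salem rate $|n|^{-d/2}$ attainable on a $d$-dimensional set, yet still decays to $0$ as Rajchman's theorem demands. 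Frostman's theorem enters as a consistency check: since $\dim K=d$, the weighted sum $\sum|\widehat{\mu}(n)|^{2}/|n|^{1-a}$ must diverge for every $a>d$, and because the $\ell^{2}$ negative half contributes a convergent weighted sum for all $a\le 1$, this divergence is thrown onto the positive half, forcing $c(n)$ to be at least of Salem size --- fully compatible with the engineered rate $|n|^{-1/p}$.

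The heart of the matter is to produce the positive coefficients at precisely this sub-Salem rate while keeping $\dim K=d$, and here I expect the genuine difficulty to be a matching \emph{lower} bound on $|c(n)|$. Upper bounds follow the pattern of Theorem \ref{thm:4}: because $\widetilde{u}$ is non-local, moving one piece of $K$ perturbs $F$ globally, so $\widehat{\mu}(n)$ cannot be written as a sum of independent terms, the second-moment sum $\sum\mathbb{E}|c(n)|^{2}$ diverges, and one is forced into a fourth-moment analysis $\sum\mathbb{E}|c(n)|^{4}$ of the type computed in \cite{KO}. The new ingredient specific to this theorem is the opposite inequality: one must show that $\sum_{n\ge 0}|c(n)|^{p}=\infty$ almost surely, that is, that the positive spectrum stays \emph{out} of $\ell^{p}$. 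Lower bounds on Fourier coefficients are characteristically harder than upper bounds, and they must be reconciled with the exact-dimension requirement, which caps how large the coefficients may be.

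The main obstacle, then, is the simultaneous reconciliation of three quantitative demands on a single random object: boundedness of the positive coefficients (so $\mu$ is a measure), a lower bound placing the positive coefficients out of $\ell^{p}$, and the exact value $\dim K=d$ --- all against the non-locality of $\widetilde{u}$, which confines the probabilistic estimates to fourth moments. I would expect the upper-bound half to track the computation of $\mathbb{E}|\widehat{G}(n)|^{4}$ recorded in \cite{KO} closely, with the lower-bound half, tied to the threshold $p>2/d$ through the gap between the rates $|n|^{-1/p}$ and $|n|^{-d/2}$, being the step that requires genuinely new work.
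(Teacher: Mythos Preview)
The paper does not actually prove this statement here --- it simply cites \cite[theorem 3.1]{KO} --- so there is no in-paper argument to compare against. But your proposal runs into a structural obstruction that the paper itself flags two lines above the theorem.

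You arrange the negative Fourier coefficients of $\mu$ to lie in $\ell^{2}$ (inherited from the unimodular boundary values of $F=e^{u+i\widetilde u}$), and then invoke $\ell^{2}\subset\ell^{p}$ since $p>2/d\ge 2$. However, a complex \emph{measure} whose negative coefficients lie in $\ell^{2}$ is necessarily absolutely continuous: setting $g=\sum_{n<0}\widehat\mu(n)e^{int}\in L^{2}\subset L^{1}$, the difference $\mu-g\,dm$ is a finite measure with spectrum in $\mathbb{Z}_{\ge 0}$, hence absolutely continuous by F.\ and M.\ Riesz, and therefore so is $\mu$. Since your $K$ is a Cantor set of dimension $d\le 1$ and Lebesgue measure zero, any measure supported on it is singular --- a contradiction. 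This is exactly what the paper means by ``a singular complex measure can not be `almost analytic' according to the theorem of the brothers Riesz'': the inner-function scheme of Theorem~\ref{thm:4} manufactures almost-analytic \emph{distributions}, never \emph{measures}. Separately, your criterion ``$c(n)$ bounded and tending to $0$, so that $\mu$ is a measure'' is not valid on its own: decay of Fourier coefficients yields only a pseudo-function. But even if one granted that step, the $\ell^{2}$ negative tail already rules the construction out.

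The consequence is that for $d<1$ any valid construction must place the negative coefficients in $\ell^{p}\setminus\ell^{2}$, so the $G-f$ mechanism cannot be reused as-is; a genuinely different device is required to produce a \emph{measure} exhibiting the one-sided $\ell^{p}$ asymmetry.
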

See \cite[theorem 3.1]{KO}.\begin{rmrks*}

1. The restriction on $p$ is sharp, due to theorem \ref{thm:1}.

2. Not every $K$ may support such a measure, $K$ has to be a Salem
set (in the same sense as above). \end{rmrks*}

\begin{qst*}Let $K$ supports $S$ with $\widehat{S}(n)\to0$ as
$n\to\infty$. Does it support an $S'$ with the two-sides condition
$|n|\to\infty$? (in other words, can a $\mathcal{U}$-set support
a distribution with $\widehat{S}(n)\to0$ as $n$ tends to positive
$\infty$?)\end{qst*}

\section{\label{sec:Arithmetics-of-compacts}``Arithmetics'' of compacts}

It was already mentioned that not only the size of the support but
rather its arithmetic nature may play a crucial role in the behaviour
of measures or distributions. The most remarkable illustration of
this phenomenon is related to the following problem. Let $K_{\theta}$
be the symmetric Cantor set with dissection ratio $\theta$, i.e.\ the
set one gets by starting with a single interval and then repeatedly
replacing each interval of length $a$ by two intervals of length
$\theta a$. When may $K_{\theta}$ satisfy the ``Menshov property'',
that is may support a measure or distribution with Fourier transform
vanishing at infinity? It was Nina Bari who discovered (in the case
when $\theta$ is rational) that the answer has nothing to do with
the size of the compact and depends on arithmetics of $\theta$. This
result is not so often mentioned in western literature. The much deeper
Salem-Zygmund theorem extends this phenomenon to irrational $\theta$.
The result is that $K_{\theta}$ supports a null-measure if and only
if the dissection ratio is not the inverse of a Pisot number, i.e.\ an
algebraic integer $>1$ all whose algebraic conjugates are $<1$ in
absolute value.

Relations between uniqueness theory and number theory are by now a
well-developed theory, see the book \cite{M72}. Let us also recall
the deterministic constructions of Salem sets \cite{Kauf81}, the
role of the so-called Kronecker sets \cite[chaptrer 7]{K70} and results
on the set of non-normal numbers \cite{P54,KS64,L86}. 

We'll finish this survey by a recent result in which the arithmetics
of a compact also plays a crucial role. It relates to the so-called
Wiener problem on cyclic vectors. 
\begin{defn*}
Let $1\leq p\leq\infty$. A function $x\in l^{p}(\mathbb{Z})$ is
called a cyclic vector if its translates spans the whole space. 
\end{defn*}
Wiener characterized cyclic vectors for $p=1$ and $p=2$:
\begin{thm*}
\hfill
\begin{enumerate}
\item $x=\{x_{k}\}$ is cyclic in $l^{1}$ iff $\widehat{x}(t):=\sum x_{k}e^{ikt}$
has no zeros; 
\item x is cyclic in $l^{2}$ iff $\widehat{x}(t)\neq0$ a.e. 
\end{enumerate}
\end{thm*}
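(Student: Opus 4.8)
The plan is to translate ``cyclicity'' into a statement about invariant subspaces and then dualize through the Fourier transform. In both parts the closed linear span of the translates $\{T^k x\}_{k\in\mathbb{Z}}$, where $(T^k x)_j=x_{j-k}$, is the smallest closed translation-invariant subspace containing $x$; since translation is convolution by $\delta_k$ and finitely supported sequences are dense, this span coincides with the closure of the principal ideal $\{y*x:y\in l^1\}$ in the $l^1$ case, and with the smallest closed subspace containing $x$ and invariant under $T^{\pm1}$ in the $l^2$ case. Under $x\mapsto\widehat x$ translation becomes multiplication by $e^{it}$, so in each case the problem becomes: when do the multiples $\{e^{ikt}\widehat x(t)\}_{k\in\mathbb{Z}}$ span the ambient function space?

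I would do the $l^2$ statement first, as it is the softer of the two. By Plancherel the map $x\mapsto\widehat x$ is an isometry of $l^2(\mathbb{Z})$ onto $L^2(\mathbb{T})$, so cyclicity is equivalent to the density of $\{e^{ikt}g\}_{k\in\mathbb{Z}}$, with $g:=\widehat x$, in $L^2(\mathbb{T})$. Rather than invoke the general description of doubly-invariant subspaces, I would compute the orthogonal complement $M^{\perp}$ of their span directly: the condition $h\perp e^{ikt}g$ for all $k$ says that every Fourier coefficient of $g\overline h$ vanishes, hence $g\overline h=0$ a.e., i.e.\ $h=0$ a.e.\ on $E:=\{g\ne0\}$. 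Thus $M^{\perp}$ consists of the $L^2$ functions supported off $E$, so $M=L^2(\mathbb{T})$ precisely when $E$ has full measure, which is exactly $\widehat x\ne0$ a.e. This settles part (2).

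For part (1) I would exploit the Banach-algebra structure: with convolution, $l^1(\mathbb{Z})$ is a commutative unital algebra (unit $\delta_0$), and the span of the translates of $x$ is the closed principal ideal $(x)$. One direction is immediate: if $\widehat x(t_0)=0$ for some $t_0$, then $y\mapsto\widehat y(t_0)$ is a contractive (hence continuous) functional annihilating every $y*x$ and therefore the whole closure $(x)$, while $\widehat{\delta_0}(t_0)=1$; so $(x)$ is a proper ideal and $x$ is not cyclic. The reverse implication is the crux: assuming $\widehat x$ has no zeros I must produce $(x)=l^1(\mathbb{Z})$, and the clean route is to show that $x$ is \emph{invertible} in $l^1(\mathbb{Z})$, for then $(x)\ni x*x^{-1}=\delta_0$ forces $(x)$ to be everything.

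The main obstacle is thus Wiener's $1/f$ theorem, which I would obtain from Gelfand theory by identifying the maximal ideal space of $l^1(\mathbb{Z})$. Any character $\phi$ is determined by $\lambda:=\phi(\delta_1)$; since $\delta_1$ and $\delta_{-1}=\delta_1^{-1}$ both have norm $1$ and $\phi$ is contractive, one gets $|\lambda|\le1$ and $|\lambda^{-1}|\le1$, so $\lambda=e^{it_0}$ and $\phi(y)=\sum_n y_n e^{int_0}=\widehat y(t_0)$. Hence the characters are exactly the point evaluations of $\widehat{\cdot}$ on $\mathbb{T}$ and the Gelfand transform is the Fourier transform. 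In a commutative unital Banach algebra an element is invertible iff no character annihilates it, so $\widehat x$ nonvanishing forces $x$ invertible, whence $x$ is cyclic. The only genuinely nontrivial point is the absence of ``hidden'' characters beyond $\mathbb{T}$, and that is secured by the norm-$1$ invertibility of $\delta_{\pm1}$; once it is in hand, both Wiener inversion and the whole theorem follow formally.
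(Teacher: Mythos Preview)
Your argument is correct and is the standard modern proof: the $l^{2}$ case via Plancherel and a direct computation of the orthogonal complement, and the $l^{1}$ case via Gelfand theory, identifying the maximal ideal space of the convolution algebra $l^{1}(\mathbb{Z})$ with $\mathbb{T}$ and then invoking the general invertibility criterion to obtain Wiener's $1/f$ theorem. The small steps you flag---that the closed span of translates equals the closure of the principal convolution ideal (finitely supported sequences are dense and convolution is continuous), that $g\overline{h}\in L^{1}$ so its Fourier coefficients determine it, and that characters are automatically contractive---are all in order.

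There is nothing to compare against, however: the paper does not prove this theorem. It is quoted as Wiener's classical characterization and serves only as background for the discussion of the Lev--Olevski\u\i\ result on $1<p<2$. So your write-up stands on its own as a complete, standard proof, but it is neither the same as nor different from anything in the paper.
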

Wiener conjectured \cite[page 93]{W32} that for every $p$, or at
least for $p<2$, cyclic vectors can be characterized by a certain
``negligibility'' condition of the zero set of $\widehat{x}$. It
turned out this is not the case:
\begin{thm}
[\cite{LO11}]Let $1<p<2$. Then there are two vectors $x$ and $y\in l^{1}$
such that
\begin{enumerate}
\item The zero sets of $\widehat{x}$ and $\widehat{y}$ are the same; 
\item $x$ is cyclic in $l^{p}$, while $y$ is not. 
\end{enumerate}
\end{thm}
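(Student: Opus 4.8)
The plan is to construct two vectors $x,y\in\ell^1(\mathbb Z)$ whose Fourier transforms $\widehat x,\widehat y$ vanish on the same set, yet only one is cyclic in $\ell^p$. The natural strategy is to engineer the common zero set to be a carefully chosen compact $K\subset\mathbb T$ on which the arithmetic structure controls cyclicity. First I would recall the duality that makes this tractable: by the Hahn--Banach theorem, a vector $x\in\ell^p$ fails to be cyclic in $\ell^p$ precisely when there is a nonzero annihilating functional in $\ell^{p'}$ (where $1/p+1/p'=1$, so $p'>2$) that is orthogonal to all translates of $x$. Translation-invariance of the annihilator means such a functional corresponds to a distribution (or a sequence defining one) supported on the zero set $Z(\widehat x)$. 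Thus $x$ is \emph{non}-cyclic in $\ell^p$ iff its zero set supports a nonzero element of the appropriate dual class, and $x$ is cyclic iff no such element exists. The whole problem is therefore reduced to comparing two compact sets that carry the same ``gross'' zero set but differ in whether they can support the relevant dual object.

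The key is to exploit the non-symmetry phenomena developed earlier in the survey. The point is that cyclicity in $\ell^p$ for $p\neq 2$ is governed not by the mere measure or dimension of the zero set, but by whether that set supports a distribution with a one-sided smoothness/decay condition of the type appearing in Theorems \ref{thm:1} and \ref{thm:4}. Concretely, I would choose a single compact $K$ and build $\widehat x$ and $\widehat y$ to have the same zero set $K$ but to vanish on $K$ to different orders, or with different local regularity, so that the dual annihilator --- which must itself be a distribution supported where $\widehat x$ (resp.\ $\widehat y$) is flat enough --- exists for $y$ but is obstructed for $x$. The non-symmetry result quantifying which Salem-type sets can support distributions satisfying a one-sided $\ell^p$ condition is exactly the tool that produces the asymmetry between the two vectors: by placing $K$ at the threshold governed by the exponent relation $p>2/d$, one arranges that the negligibility of $K$ alone cannot decide cyclicity, so two vectors sharing $K$ can land on opposite sides.

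The main obstacle, as I see it, is controlling the zero set exactly rather than just up to measure. Demanding that $\widehat x$ and $\widehat y$ have \emph{literally the same} zero set $Z(\widehat x)=Z(\widehat y)$, while forcing them to differ in cyclicity, is delicate: cyclicity is sensitive to the behaviour of $\widehat x$ near its zeros (the rate of vanishing, whether the contact is tangential, and the Taylor structure at boundary zeros), and one must ensure these fine differences do not inadvertently perturb the zero set itself. I expect this to require a construction where $K$ is a Cantor-type set with prescribed arithmetic (a Salem or Pisot-related set as in the ``Arithmetics of compacts'' discussion), over which one builds $\widehat x$ and $\widehat y$ by multiplying a fixed function with zero set exactly $K$ by auxiliary factors that adjust the order of contact without creating new zeros and without removing old ones. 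Verifying that $x,y$ genuinely lie in $\ell^1$ (so that $\widehat x,\widehat y$ are continuous and the zero sets are honestly comparable) while simultaneously meeting the dual-side $\ell^{p'}$ estimates is where the real work concentrates, and it is here that the quantitative Salem-set and non-symmetry estimates from the earlier sections must be invoked with their sharp exponents.
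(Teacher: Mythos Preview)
The paper does not actually prove this theorem; it only records that the argument in \cite{LO11} ``is based on certain development of ideas of Piatetski-Shapiro'', namely the functional-analytic and \emph{probabilistic} techniques he introduced into Riemannian uniqueness theory (the $M$-set versus $M_0$-set phenomenon). Your sketch diverges from that hint in an essential way.

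Your Hahn--Banach reduction is correct and is indeed the starting point: since $x\in\ell^1$, $\widehat x\in A(\mathbb T)$, and $x$ fails to be cyclic in $\ell^p$ exactly when some nonzero $\varphi\in\ell^{p'}\subset c_0$ satisfies $\widehat x\cdot\widehat\varphi=0$ in $PM(\mathbb T)$; Wiener's local inversion then forces $\supp\widehat\varphi\subset Z(\widehat x)$. But from here you reach for the wrong toolbox. The results you invoke --- Theorems~1, 3, 4 and the exponent relation $p>2/d$ of Theorem~5 --- are all about \emph{one-sided} Fourier conditions, i.e.\ about asymmetry between $n>0$ and $n<0$. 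Cyclicity in $\ell^p(\mathbb Z)$ has no such one-sidedness: the annihilator lives in the two-sided space $\ell^{p'}(\mathbb Z)$, the problem is invariant under $n\mapsto-n$, and there is no analytic/anti-analytic splitting to exploit. The ``non-symmetry phenomena'' of \S\S2--5 simply do not bear on whether $K$ carries a two-sided $\ell^{p'}$ annihilator, so they cannot be ``exactly the tool'' separating $x$ from $y$.

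Your secondary idea --- distinguishing $x$ from $y$ by the order or regularity of vanishing of $\widehat x,\widehat y$ on $K$ --- contains a correct observation (the equation $\widehat x\cdot\widehat\varphi=0$ depends on \emph{how} $\widehat x$ vanishes, not only where), but on the perfect Cantor-type $K$ one is forced to use there is no workable calculus of ``order of contact'', and annihilators are not built from point-derivatives of Dirac masses. What the paper points to instead is the Piatetski-Shapiro mechanism: a \emph{probabilistic} construction of $K$ (random translates, Riesz-product or random-sign arguments) together with two functions $\widehat x,\widehat y\in A(\mathbb T)$ vanishing exactly on $K$, arranged so that one of them generates a dense translation-invariant subspace while a nonzero $\ell^{p'}$ pseudo-function supported on $K$ annihilates the other. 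The probabilistic input --- wholly absent from your plan --- is the crux, not the one-sided Frostman/Salem estimates.
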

The approach to the proof is based on certain development of ideas
of Piatetskii-Shapiro \cite{P54} who aparently was the first who
inserted functional-analytic and probabilisic ideas into Riemannian
uniqueness theory.

\end{document}